\newtheorem{theorem}{Theorem}[section]
\newaliascnt{lemma}{theorem}
\newtheorem{lemma}[lemma]{Lemma}
\newaliascnt{remark}{theorem}
\newtheorem{remark}[remark]{Remark}
\newaliascnt{proposition}{theorem}
\newaliascnt{corollary}{theorem}
\newtheorem{corollary}[corollary]{Corollary}
\newaliascnt{conjecture}{theorem}
\newtheorem{conjecture}[conjecture]{Conjecture}
\def\tagform@#1{\maketag@@@{\ignorespaces#1\unskip\@@italiccorr}}
\let\orgtheequation\theequation
\def\theequation{(\orgtheequation)}
\def\equationautorefname~{}
\newcommand{\arxiv}[1]{%
 \href{http://front.math.ucdavis.edu/#1}{ArXiv:#1}}
\newcommand{\mref}[1]{%
\href{http://www.ams.org/mathscinet-getitem?mr=#1}{#1}}
\begin{document}

\title[]{On mixed Dirichlet-Neumann eigenvalues of triangles.}
\author[]{Bart{\l}omiej Siudeja}
 
\begin{abstract}
We order lowest mixed Dirichlet-Neumann eigenvalues of right triangles according to which sides we apply the Dirichlet conditions. It is generally true that Dirichlet condition on a superset leads to larger eigenvalues, but it is nontrivial to compare e.g. the mixed cases on triangles with just one Dirichlet side. As a consequence of that order we also classify the lowest Neumann and Dirichlet eigenvalues of rhombi according to their symmetry/antisymmetry with respect to the diagonal.

We also give an order for the mixed Dirichlet-Neumann eigenvalues on arbitrary triangle, assuming two Dirichlet sides. The single Dirichlet side case is conjectured to also have appropriate order, following right triangular case.
\end{abstract}

\maketitle

\section{Introduction}
Laplace eigenvalues are often interpreted as frequencies of vibrating membranes. In this context, the natural (Neumann) boundary condition corresponds to a free membrane, while Dirichlet condition indicates a membrane is fixed in place on the boundary. Intuitively, mixed Dirichlet-Neumann conditions should mean that the membrane is partially attached, and the larger the attached portion, the higher the frequencies.

Using variational characterization of the frequencies (see \autoref{sec:def}) one can easily conclude that increasing the attached portion leads to increased frequencies. In this paper we investigate a harder, yet still intuitively clear case of imposing Dirichlet conditions on various sides of triangles. Imposing Dirichlet condition on one side gives smaller eigenvalues than imposing it on that side and one more. However, is it true that imposing Dirichlet condition on shorter side leads to smaller eigenvalue than the Dirichlet condition on a longer side? 

Note that one can also think about eigenvalues as related to the survival probability of the Brownian motion on a triangle, reflecting on the Neumann boundary, and dying on the Dirichlet part. In this context, it is clear that enlarging the Dirichlet part leads to shorter survival time. It is also reasonable, that having Dirichlet condition on one long side gives larger chance of dying, than having shorter Dirichlet side. However, this case is far from obvious to prove, especially that the difference might be very small for nearly equilateral triangles.

Let $L$, $M$ and $S$ denote the lengths of the sides of a triangle $T$, so that $L\ge M\ge S$. Let the smallest eigenvalue corresponding to Dirichlet condition applied to a chosen set of sides be denoted by $\lambda_1^{set}$. E.g. $\lambda_1^{LS}$ would correspond to Dirichlet condition imposed on the longest and shortest sides. Let also $\mu_2$ and $\lambda_1$ denote the smallest nonzero pure Neumann and pure Dirichlet eigenvalues of the same triangles.

\begin{theorem} \label{thm:order}
  For any right triangle with smallest angle satisfying $\pi/6<\alpha<\pi/4$ 
  \begin{align*}
    0=\mu_1&<\lambda_1^S<\lambda_1^M<\mu_2< \lambda_1^L< \lambda_1^{MS}<\lambda_1^{LS}<\lambda_1^{LM}<\lambda_1.
  \end{align*}
  When $\alpha=\pi/6$ (half-of-equilateral triangle) we have $\lambda_1^M=\mu_2$, and for $\alpha=\pi/4$ (right isosceles triangle) we have $S=M$ and $\lambda_1^L=\mu_2$. All other inequalities stay sharp in these cases.

  Furthermore for arbitrary triangle
  \begin{align*}
    \min\{\lambda_1^S,\lambda_1^M,\lambda_1^L\}<\mu_2\le \lambda_{1}^{MS}<\lambda_1^{LS}<\lambda_1^{LM},
  \end{align*}
  as long as the appropriate sides have different lengths. However, it is possible that $\mu_2>\lambda_1^L$ (for any small perturbation of the equilateral triangle) or $\mu_2<\lambda_1^M$ (for right triangles with $\alpha<\pi/6$).
\end{theorem}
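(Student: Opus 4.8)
The plan is to reduce every mixed problem with exactly one Neumann side to a pure Dirichlet problem on a doubled domain, and to sort the Neumann and mixed spectra of $T$ by the symmetry of that doubling. Fix a side $\rho$ and let $K_\rho$ be the kite obtained by reflecting $T$ across the line through $\rho$. Reflection commutes with the Laplacian, so each spectrum on $K_\rho$ splits into an even and an odd part: even Dirichlet eigenfunctions of $K_\rho$ restrict to mixed eigenfunctions on $T$ with Neumann data on $\rho$ and Dirichlet data on the other two sides, even Neumann eigenfunctions restrict to Neumann eigenfunctions on $T$, and the odd Neumann eigenfunctions restrict to the single-side mixed problem with Dirichlet data on $\rho$. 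Because the ground state of a symmetric domain is even, this yields without caveat $\lambda_1^{MS}=\lambda_1(K_L)$, $\lambda_1^{LS}=\lambda_1(K_M)$, $\lambda_1^{LM}=\lambda_1(K_S)$, while $\mu_2$ is the first nonconstant even Neumann eigenvalue of $K_L$ and $\lambda_1^{L}$ is the first odd Neumann eigenvalue of $K_L$.

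To prove $\min\{\lambda_1^S,\lambda_1^M,\lambda_1^L\}<\mu_2$ I would transplant a nodal piece of the second Neumann eigenfunction. Let $u$ be an eigenfunction for $\mu_2$; it changes sign, and for a triangle its nodal set is a simple arc meeting $\partial T$ perpendicularly at two points, hence missing the interior of at least one side $\sigma^*$. That side lies in the closure of one nodal domain $T_+$; let $T_-$ be the other nodal domain and set $w=u$ on $T_-$ and $w=0$ on $T_+$. Then $w\in H^1(T)$ because $u$ vanishes along the nodal arc, $w$ vanishes on $\sigma^*$, and integrating $-\Delta u=\mu_2 u$ over $T_-$ (the boundary terms drop since $u=0$ on the arc and $\partial_n u=0$ on $\partial T$) gives
\begin{equation*}
\frac{\int_T|\nabla w|^2}{\int_T w^2}=\frac{\int_{T_-}|\nabla u|^2}{\int_{T_-}u^2}=\mu_2 .
\end{equation*}
Thus $w$ is admissible for $\lambda_1^{\sigma^*}$, so $\lambda_1^{\sigma^*}\le\mu_2$, and the inequality is strict because $w$ vanishes on the open set $T_+$ and so cannot be the $\lambda_1^{\sigma^*}$-eigenfunction, by unique continuation.

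For $\mu_2\le\lambda_1^{MS}$ the key is that $\mu_2$ and $\lambda_1^{MS}$ are the bottoms of the even Neumann and even Dirichlet spectra of the \emph{same} domain $K_L$, i.e.\ they sit in one symmetry sector, so a Filonov-type inequality applies: I would enlarge the even Dirichlet ground state of $K_L$ by the even part of a $\lambda_1^{MS}$-plane wave $e^{i\sqrt{\lambda_1^{MS}}\,\omega\cdot x}$ to build a two-dimensional even trial space whose Neumann Rayleigh quotient is at most $\lambda_1^{MS}$, whence $\mu_2\le\lambda_1^{MS}$ (strict away from the equilateral degeneracy). The step to check is that Filonov's construction survives restriction to the even subspace; the essential feature is that the two Dirichlet sides $M,S$ are interchanged by the reflection, keeping the comparison inside one sector. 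This also explains why the one-sided statement can fail: $\lambda_1^L$ is the first \emph{odd} Neumann eigenvalue of $K_L$, living in the other sector, hence unconstrained by $\mu_2$ and free to drop below it for nearly equilateral triangles.

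Finally, after the reduction the chain $\lambda_1^{MS}<\lambda_1^{LS}<\lambda_1^{LM}$ is the purely geometric claim $\lambda_1(K_L)<\lambda_1(K_M)<\lambda_1(K_S)$ for the three kites on the same triangle; all have area $2|T|$ and perpendicular diagonals $\rho$ and $4|T|/\rho$, so near the equilateral case reflecting across a longer side gives a rounder kite and a smaller ground state. I would prove the monotonicity by deforming $T$ through a one-parameter family (for instance sliding the apex parallel to the base so the reflection side grows at fixed area) and showing, via Hadamard's formula $\dot\lambda_1=-\int_{\partial}|\partial_n u|^2\,(V\cdot n)$, that the normal velocity of the kite boundary has the sign forcing $\lambda_1$ strictly down as $\rho$ grows. \emph{Establishing that sign for every triangle is the main obstacle}: the altitude foot splits $\rho$ unevenly, so the kite is not a rhombus and $V\cdot n$ is not manifestly one-signed, and this is precisely the delicate comparison that the one-sided case only conjectures. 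The two possibility statements I would settle by perturbation analysis at the equilateral triangle, where symmetry forces several of these eigenvalues to coincide and a first-order computation tips $\mu_2$ above $\lambda_1^L$, and by the explicit eigenvalues of the half-equilateral triangle ($\alpha=\pi/6$, where $\mu_2=\lambda_1^M$) together with monotonicity in $\alpha$, which yields $\mu_2<\lambda_1^M$ for $\alpha<\pi/6$.
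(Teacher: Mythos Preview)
Your nodal–domain argument for $\min\{\lambda_1^S,\lambda_1^M,\lambda_1^L\}<\mu_2$ is exactly the paper's, and your kite identifications $\lambda_1^{MS}=\lambda_1(K_L)$ etc.\ are correct and also used there. Your Filonov-in-the-even-sector idea for $\mu_2\le\lambda_1^{MS}$ is a genuine alternative to what the paper does (it invokes Levine--Weinberger, $\mu_3(K_L)\le\lambda_1(K_L)$, together with the fact that at most one of $\mu_2(K_L),\mu_3(K_L)$ is odd); your version would be self-contained once you check that a plane wave with wave vector along the axis of symmetry is even, which it is.

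The genuine gap is the chain $\lambda_1^{MS}<\lambda_1^{LS}<\lambda_1^{LM}$. Your Hadamard plan is not set up to prove it: ``sliding the apex parallel to the base so the reflection side grows'' deforms the \emph{triangle}, hence compares kites built from \emph{different} triangles, whereas the statement compares $K_L,K_M,K_S$ for one fixed $T$. Even if you reinterpret the plan as a one-parameter family of domains interpolating between, say, $K_L$ and $K_M$, the boundary velocity cannot be one-signed (the two kites have the same area $2|T|$, so $\int_{\partial}(V\cdot n)=0$), and you yourself flag the sign of $V\cdot n$ as ``the main obstacle''. The paper avoids this entirely by a polarization argument: with the common Neumann vertex at the origin, reflect across the bisector of that angle, and on the overlap replace the eigenfunction $u\ge0$ for $\lambda_1^{LS}$ by $\max(u,\bar u)$ and $\min(u,\bar u)$; this produces a valid trial function for $\lambda_1^{MS}$ with the same Rayleigh quotient and an extra Dirichlet arc, giving strict inequality. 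The same trick handles $\lambda_1^{LS}<\lambda_1^{LM}$ (and the paper also gives an independent continuous Steiner symmetrization proof of the latter for right triangles).

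Finally, your proposal does not touch the right–triangle chain $\lambda_1^S<\lambda_1^M<\mu_2<\lambda_1^L<\lambda_1^{MS}$ at all. In the paper each of these requires its own device: $\lambda_1^S<\lambda_1^M$ via an ``unknown trial function'' comparison of the obtuse and acute isosceles doublings; $\lambda_1^M\lessgtr\mu_2$ via the super/subequilateral dichotomy of the doubling across $M$ (this is what actually pins the $\alpha=\pi/6$ threshold, not ``monotonicity in $\alpha$''); $\mu_2<\lambda_1^L$ via an explicit trial function bound against the Hooker--Protter lower bound for $\lambda_1$ of a rhombus; and $\lambda_1^L<\lambda_1^{MS}$ by domain monotonicity, since the kite $K_L$ sits inside the rhombus built from four copies of $T$.
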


Note that, for arbitrary polygonal domains, it is not always the case that a longer restriction leads to a higher eigenvalue (see \autoref{rem:trap}). The theorem also asserts that a precise position of the smallest nonzero Neumann eigenvalue in the ordered sequence is an exception, rather than a rule (even among triangles). Nevertheless, we conjecture that mixed eigenvalues of triangles can be fully ordered. More precisely, we conjecture that all cases missing in the above theorem are still true:
\begin{conjecture}
  For arbitrary triangle
  \begin{align*}
    \lambda_1^S<\lambda_1^M<\lambda_1^L<\lambda_1^{MS},
  \end{align*}
  as long as appropriate sides have different lengths.
\end{conjecture}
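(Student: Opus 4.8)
The plan is to split the conjecture into the three genuinely nontrivial comparisons $\lambda_1^S<\lambda_1^M$, $\lambda_1^M<\lambda_1^L$ and $\lambda_1^L<\lambda_1^{MS}$; none of them reduces to the elementary monotonicity of enlarging the Dirichlet set, since the side-sets are not nested. In every case I would use the variational characterization $\lambda_1^X=\min\int_T|\nabla u|^2/\int_T u^2$ over $u\in H^1(T)$ vanishing on the prescribed side(s) $X$, together with the fact that each mixed ground state is simple and sign-definite in the interior (vanishing only on its Dirichlet side); the latter is what keeps the perturbation analysis below unambiguous.

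The structural point I would exploit is that each functional $\lambda_1^X$ is an invariant of the congruence class of $T$, so on the two-dimensional moduli space of triangle shapes the differences $\lambda_1^M-\lambda_1^S$ and $\lambda_1^L-\lambda_1^M$ vanish exactly on the isosceles loci where the two compared sides are equal (there a reflection interchanges the two problems). This structure already forces a subtlety on the method: since a triangle and its mirror image represent the same shape, sliding a vertex to break the symmetry by a parameter $t$ gives congruent triangles for $t$ and $-t$, so each $\lambda_1^X(t)$ is even in $t$ and the difference vanishes to \emph{first} order in $t$. Hence the inequality off the isosceles locus is a second-order effect: I would compute the second shape variation of $\lambda_1^M$ and $\lambda_1^S$ at the isosceles triangle — which requires, beyond the ground state, the first-order eigenfunction correction solving an inhomogeneous Neumann/Dirichlet problem — and extract the sign of the leading coefficient, which I expect to confirm the physical prediction that lengthening the Dirichlet side raises the eigenvalue. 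The analogous second variation at the $L=M$ locus handles $\lambda_1^M<\lambda_1^L$.

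With the sign of the second variation in hand, I would propagate the strict inequalities across the whole moduli space by continuity and a no-crossing argument: the three single-side eigenvalues are continuous in the shape, coincide pairwise exactly on the isosceles loci, are ordered on the sub-arc of right triangles with $\pi/6<\alpha<\pi/4$ by \autoref{thm:order}, and have explicit, mutually different asymptotics on the degenerate (collapsing) triangles forming the boundary of moduli space; anchoring on these, it would remain to show that each difference can vanish only on its isosceles locus. For the remaining link $\lambda_1^L<\lambda_1^{MS}$ I would combine $\mu_2\le\lambda_1^{MS}$ from \autoref{thm:order} with the reflection trick that underlies the rhombus corollary announced in the abstract: doubling $T$ across a side turns the one-side and two-side mixed problems into symmetric/antisymmetric eigenvalue problems on a rhombus or kite, where they become directly comparable.

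The main obstacle is ruling out interior crossings globally, and it concentrates at the equilateral triangle. There the extra dihedral symmetry makes all three single-side eigenvalues coincide, the two isosceles loci cross, and the second-order term that controls the scalene regime degenerates further, so the sign must be read off from a higher-order expansion — precisely the regime in which \autoref{thm:order} already shows the location of $\mu_2$ to flip and the gaps to be smallest. Two further technical difficulties feed into this: the Hadamard and second-variation formulas on a polygon carry vertex contributions that must be justified at the corners, and the shape-to-eigenfunction map is only piecewise smooth across the relabeling loci where two sides exchange their order, so the perturbation identities and the no-crossing argument must be set up to survive those loci.
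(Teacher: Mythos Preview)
There is nothing to compare against: the paper states this as an open \emph{conjecture} and gives no proof. Its partial results cover only right triangles (for $\lambda_1^S<\lambda_1^M$ and $\lambda_1^L<\lambda_1^{MS}$) and, for arbitrary triangles, only the separate chain $\lambda_1^{MS}<\lambda_1^{LS}<\lambda_1^{LM}$.

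Your outline correctly identifies the structure --- the single-side differences vanish exactly on the isosceles loci, the gap is second order there, and the equilateral point is the delicate spot --- but it does not close the argument. Two concrete gaps:

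First, your route for $\lambda_1^L<\lambda_1^{MS}$ fails as written. You propose to chain $\lambda_1^L$ to $\mu_2\le\lambda_1^{MS}$ and invoke the rhombus/kite reflection. But \autoref{thm:order} explicitly records that $\mu_2>\lambda_1^L$ for every small perturbation of the equilateral triangle, so the bridge $\lambda_1^L<\mu_2\le\lambda_1^{MS}$ breaks precisely in the hardest region. And the kite-inside-rhombus inclusion used for right triangles in \autoref{sec:mixed} depends on the right angle: doubling across $L$ yields a rhombus only then. For a general triangle the double across $L$ is merely a kite, and there is no evident domain monotonicity between that kite and the kite obtained by doubling across $M$ and $S$.

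Second, the global ``no-crossing'' step is asserted, not proved. Local second-variation signs along the isosceles loci, the known order on the one-dimensional right-triangle arc, and asymptotics at degenerate triangles do not by themselves exclude closed zero-curves of $\lambda_1^M-\lambda_1^S$ (or of the other differences) in the interior of the two-dimensional moduli space: a continuous function on a surface that is positive on a sub-arc and along the boundary can still vanish on an interior loop. You flag this as ``the main obstacle'' but supply no mechanism to rule it out. Without one --- for instance a monotonicity argument valid along a full foliation of moduli space, or a direct variational comparison of the two mixed problems analogous to the polarization argument the paper uses for the two-Dirichlet-side chain --- the proposal is a plausible program rather than a proof.
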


Even though right triangles are a rather special case, they are of interest in studying other polygonal domains. In particular, recent paper by Nitsch \cite{Ni14} studies regular polygons via eigenvalue perturbations on right triangles. Similar approach is taken in the author's upcoming collaboration \cite{NSY}. Finally, right triangles play the main role in the recent progress on the celebrated hot-spots conjecture. Newly discovered approach due to Miyamoto \cite{Mi09,Mi13} led to new partial results for acute triangles \cite{Shot} (see also Polymath 7 project \href{http://polymathprojects.org/tag/polymath7/}{polymathprojects.org/tag/polymath7/}). The acute cases rely on eigenvalue comparisons of triangles, which were first considered by Miyamoto on right triangles. 

Eigenvalue problems on right triangles were also used to establish symmetry (or antisymmetry) of the eigenfunction for the smallest nonzero Neumann eigenvalue of kites (Miyamoto \cite{Mi13}, the author of the present paper \cite{Shot}) and isosceles triangles \cite{LSminN} (in collaboration with Richard Laugesen). It is almost trivial to conclude that the eigenfunction can be assumed symmetric or antisymmetric with respect to a line of symmetry of a domain. It is however very hard to establish which case actually happens. This problem is also strongly connected to the hot-spots conjecture, given that many known results assume enough symmetry to get a symmetric eigenfunction, e.g. Jerison-Nadirashvili \cite{JN00} or Ba\~nuelos-Burdzy \cite{BB99}. As a particular case, the latter paper implies that the smallest nonzero Neumann eigenvalue of a narrow rhombus is antisymmetric with respect to the short diagonal. In order to claim the same for all rhombi one needs to look at the very important hot-spots result due to Atar and Burdzy \cite{AB04}. Their Corollary 1, part ii) can be applied to arbitrary rhombi, but it requires a very sophisticated stochastic analysis argument and a solution of a more complicated hot-spots conjecture to achieve the goal.

As a consequence of the ordering of mixed eigenvalues of right triangles we order first \textit{four} Neumann (and two Dirichlet) eigenvalues of rhombi, depending on their symmetry/antisymmetry. We achieve more than the above mentioned papers, using elementary techniques.

Our result applies to all rhombi not narrower than the ``equilateral rhombus'' composed of two equilateral triangles. This particular case, as well as the square are interesting boundary cases due to the presence of multiple eigenvalues.
\begin{corollary}\label{cor:rhombus}
    For rhombi with the smallest angle $2\alpha>\pi/3$ we have
    \begin{itemize}
      \item $\mu_2$, $\mu_3$, $\mu_4$ and $\lambda_2$ are simple.
      \item $\mu_4<\lambda_1$,
      \item the eigenfunction for $\mu_2$ is antisymmetric with respect to the short diagonal,
      \item the eigenfunction for $\mu_3$ is antisymmetric with respect to the long diagonal,
      \item the eigenfunction for $\mu_4$ is doubly symmetric,
      \item the eigenfunction for $\lambda_2$ is antisymmetric with respect to the short diagonal,
    \end{itemize}
    Furthermore, if $2\alpha<\pi/3$ then the doubly symmetric mode belongs to $\mu_3$, and the mode antisymmetric with respect to the long diagonal can have arbitrarily high index (as $\alpha\to0$).
\end{corollary}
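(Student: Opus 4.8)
The plan is to exploit the two diagonals of the rhombus, which are its lines of symmetry, in order to reduce every eigenvalue problem on the rhombus to a mixed Dirichlet--Neumann problem on one of the four congruent right triangles cut out by the diagonals. Place the rhombus with its center at the origin and its diagonals along the axes, so that the acute vertices (angle $2\alpha$) lie on the long diagonal and the obtuse vertices on the short diagonal. One quarter is the right triangle $T$ with its right angle at the center, longer leg $M$ lying on the long diagonal, shorter leg $S$ lying on the short diagonal, and hypotenuse $L$ forming a side of the rhombus; its smallest angle is exactly $\alpha$, and the hypothesis $2\alpha>\pi/3$ reads $\alpha>\pi/6$, so \autoref{thm:order} applies to $T$.

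Next I would set up the parity decomposition. Every eigenfunction of the rhombus, Neumann or Dirichlet, splits into four pieces according to its symmetry or antisymmetry across the two diagonals, and each parity class is isospectral to a mixed problem on $T$: symmetry across a diagonal imposes a Neumann condition on the corresponding leg, antisymmetry imposes a Dirichlet condition, while the hypotenuse $L$ simply inherits the boundary condition of the original rhombus problem. Thus for the Neumann rhombus the four class-bottoms are $0$ (doubly symmetric, with second eigenvalue $\mu_2(T)$ in this class), $\lambda_1^S$ (antisymmetric across the short diagonal), $\lambda_1^M$ (antisymmetric across the long diagonal), and $\lambda_1^{MS}$ (doubly antisymmetric); for the Dirichlet rhombus they are $\lambda_1^L$, $\lambda_1^{LS}$, $\lambda_1^{LM}$, and $\lambda_1(T)$ in the same four classes. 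Merging the lists and reading off $0<\lambda_1^S<\lambda_1^M<\mu_2(T)<\lambda_1^{MS}$ together with $\lambda_1^L<\lambda_1^{LS}<\lambda_1^{LM}<\lambda_1(T)$ from \autoref{thm:order} identifies $\mu_2,\mu_3,\mu_4$ of the rhombus with $\lambda_1^S,\lambda_1^M,\mu_2(T)$ and hence their stated parities, pins $\lambda_2=\lambda_1^{LS}$ as antisymmetric across the short diagonal, and gives $\mu_4=\mu_2(T)<\lambda_1^L=\lambda_1$.

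The remaining work is a separation and simplicity bookkeeping, which I expect to be the main obstacle. To place $\mu_2(T)$ at rhombus-index $4$ rather than lower, I would use Neumann--Dirichlet bracketing (min--max over the smaller, Dirichlet-constrained form domain) to bound the \emph{second} eigenvalue of each class other than the doubly symmetric one from below by $\mu_2(T)$; to then conclude simplicity I must upgrade these to strict inequalities, e.g.\ by verifying that the second pure-Neumann eigenfunction of $T$ does not vanish on the constrained leg, whereas the comparisons among the distinct class-bottoms are already strict in \autoref{thm:order}. The same scheme yields simplicity of $\lambda_2=\lambda_1^{LS}$ in the Dirichlet spectrum. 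Finally, for $2\alpha<\pi/3$ I would invoke the arbitrary-triangle part of \autoref{thm:order}, where $\mu_2(T)<\lambda_1^M$: the doubly symmetric class-bottom then overtakes the long-diagonal-antisymmetric one, so the doubly symmetric mode drops to $\mu_3$. The claim that the long-diagonal-antisymmetric mode attains unbounded index as $\alpha\to0$ would follow from a thin-triangle asymptotic, namely that $\lambda_1^M$ (the first eigenvalue with Dirichlet on the long leg) scales like the transverse frequency and blows up as the height of $T$ collapses, while the doubly symmetric pure-Neumann eigenvalues of $T$ remain bounded, so arbitrarily many of them fall below $\lambda_1^M$.
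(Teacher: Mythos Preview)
Your plan is essentially the paper's own proof: decompose by parity across the two diagonals, identify each parity class with a mixed problem on the quarter right triangle $T$, and then read off the order of the class-bottoms from \autoref{thm:order}. The identifications $\mu_2(R)=\lambda_1^S$, $\mu_3(R)=\lambda_1^M$, $\mu_4(R)=\mu_2(T)$, $\lambda_1(R)=\lambda_1^L$, $\lambda_2(R)=\lambda_1^{LS}$ and the inequality $\mu_4(R)<\lambda_1(R)$ all match.

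The one methodological difference is in the separation/simplicity bookkeeping for the Neumann spectrum. The paper invokes an external result (\cite[Lemma~2.1]{Shot}) to the effect that the combined eigenspace of $\mu_2(R)$ and $\mu_3(R)$ can contain at most one mode antisymmetric with respect to a given diagonal, and then disposes of $\mu_4(R)$ by observing that the remaining antisymmetric candidates $\lambda_1^{MS},\lambda_k^S,\lambda_k^M$ ($k\ge2$) all exceed $\mu_2(T)$. Your alternative---form-domain inclusion giving $\lambda_k^S,\lambda_k^M\ge\mu_k(T)$, upgraded to strict via the observation that a Neumann eigenfunction of $T$ vanishing identically on a leg would have zero Cauchy data there and hence vanish by unique continuation---is equally valid and more self-contained; it avoids the citation at the cost of spelling out the strictness step.

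One caution: your claim that ``the same scheme'' settles simplicity of $\lambda_2(R)=\lambda_1^{LS}$ is too quick. For the Dirichlet rhombus the doubly symmetric class is again the largest form domain, so bracketing yields $\lambda_k^{LS}\ge\lambda_k^{L}$, which is the wrong direction; what is actually needed is $\lambda_2^{L}>\lambda_1^{LS}$, and this does not follow from the inclusion you describe. The paper's Section~4 likewise treats only the Neumann eigenvalues explicitly and does not spell out this step, so you are in good company, but be aware that the Dirichlet $\lambda_2$ assertion requires a separate argument (e.g.\ a nodal-domain count for a putative doubly symmetric second mode on $R$) rather than a repetition of the Neumann bookkeeping.
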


Perhaps the most interesting case of our result about rhombi is that the fourth Neumann eigenvalue of nearly square rhombus is smaller than its smallest Dirichlet eigenvalue (and is doubly symmetric). This strengthens classical eigenvalue comparison results: Payne \cite{Pa55}, Levine-Weinberger \cite{LW86}, Friedlander \cite{Fr91} and Filonov \cite{Fi04} (on smooth enough domains the third Neumann eigenvalue is smaller than the first Dirichlet eigenvalue, while on convex polygons only the second eigenvalue is guaranteed to be below the Dirichlet case, and the third is not larger than it). This type of eigenvalue comparison is traditionally used to derive some conclusions about the nodal set of the Neumann eigenfunction, e.g. an eigenfunction for $\mu_2$ cannot have a nodal line that forms a loop. Recent progress on hot-spots conjecture due to Miyamoto \cite{Mi13} and the author \cite{Shot} relies on such eigenvalue comparisons and similar nodal line considerations. Furthermore, author's forthcoming collaboration \cite{NSY} leverages the improved fourth eigenvalue comparison on rhombi in studying regular polygons.

Our proofs for mixed eigenvalues on triangles are short and elementary, yet a very broad spectrum of techniques is actually needed. Evan though the comparisons look mostly the same, their proofs are strikingly different. Depending on the case, we use: variational techniques with explicitly or implicitly defined test functions, polarization (a type of symmetrization) applied to mixed boundary conditions, nodal domain considerations, or an unknown trial function method (see \cite{LSminN, LSminD}).

\section{Variational approach and auxiliary results}\label{sec:def}
The mixed Dirchlet-Neumann eigenvalues of the Laplacian on a right triangle $T$ with sides of length $L\ge M\ge S$ can be obtained by solving 
\begin{align*}
  \Delta u &= \lambda^D u,\text{ on }T,\\
u &= 0\text{ on }D\subset \{L,M,S\},\\
  \partial_\nu u &= 0\text{ on }\partial T\setminus D.
\end{align*}
The Dirichlet condition imposed on $D$ can be any combination of the triangle's sides, as mentioned in the introduction. For simplicity we denote $\lambda=\lambda^{LMS}$ (purely Dirichlet eigenvalue) and $\mu=\lambda^{\emptyset}$ (purely Neumann eigenvalue).

The same eigenvalues can also be obtained by minimizing the Rayleigh quotient
\begin{align*}
  R[u] = \frac{\int_T |\nabla u|^2}{\int_T u^2}.
\end{align*}
In particular
\begin{align}
  \lambda_1^D &= \inf_{u\in H^1(T), u=0\text{ on }D} R[u],\label{RayleighDir}\\
  \mu_2 &= \inf_{u\in H^1(T),\int_T u=0} R[u],\label{RayleighNeu}
\end{align}
For an overview of the variational approach we refer the reader to Bandle \cite{Ba80} or Blanchard-Br\"uning \cite{BB92}.

For each kind of mixed boundary conditions we have an orthonormal sequence of eigenfunctions and
\begin{align*}
  0<\lambda_1^D<\lambda_2^D\le \lambda_3^D\le \dots \to \infty,
\end{align*}
as long as $D$ is not empty. When $D$ is empty (purely Neumann case) we have
\begin{align*}
  0=\mu_1<\mu_2\le \mu_3\le \mu_4\le\dots\to \infty.
\end{align*}
The sharp inequality $\mu_2<\mu_3$ for all nonequilateral triangles was recently proved by the author \cite{Shot}. Similar result $\lambda_2<\lambda_3$ should hold for purely Dirichlet eigenvalues, but this remains an open problem.

The fact that $\lambda_1^D<\lambda_2^D$ is a consequence of the general smallest eigenvalue simplicity:
\begin{lemma}\label{lem:positive}
  Let $\Omega$ be a domain with Dirichlet condition on $D\ne \emptyset$ and Neumann condition on $\partial \Omega\setminus D$. Then $0<\lambda_1^D<\lambda_2^D$ and the eigenfunction $u_1$ belonging to $\lambda_1^D$ can be taken nonnegative.
\end{lemma}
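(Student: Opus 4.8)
The plan is to establish the three assertions in turn: strict positivity of $\lambda_1^D$, the existence of a nonnegative ground state, and finally the strict inequality $\lambda_1^D<\lambda_2^D$ (simplicity of the bottom eigenvalue). The first two are soft consequences of the variational characterization of $\lambda_1^D$, whereas simplicity rests on a maximum-principle argument combined with orthogonality of eigenfunctions. Throughout I assume $\Omega$ is a bounded connected domain with boundary regular enough (e.g. Lipschitz, as for a triangle) that traces on $D$ make sense and the compact Rellich embedding $H^1(\Omega)\hookrightarrow L^2(\Omega)$ is available, so that the infimum defining $\lambda_1^D$ is attained.

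For positivity I would argue directly from the Rayleigh quotient. Since $R[u]\ge 0$ for every admissible $u$, we have $\lambda_1^D\ge 0$; and $\lambda_1^D=0$ would force a normalized minimizer $u$ with $\int_\Omega\lvert\nabla u\rvert^2=0$, hence $u$ constant on the connected domain $\Omega$. The trace condition $u=0$ on $D\ne\emptyset$ makes this constant vanish, contradicting $\int_\Omega u^2=1$. Thus $\lambda_1^D>0$. For the nonnegative eigenfunction I would replace a minimizer $u_1$ by $\lvert u_1\rvert$: the standard fact that $\lvert u_1\rvert\in H^1(\Omega)$ with $\lvert\nabla\lvert u_1\rvert\rvert=\lvert\nabla u_1\rvert$ almost everywhere gives $R[\lvert u_1\rvert]=R[u_1]=\lambda_1^D$, and $\lvert u_1\rvert$ still vanishes on $D$ in the trace sense, so $\lvert u_1\rvert$ is again a minimizer, hence an eigenfunction.

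For simplicity I would combine interior positivity with orthogonality. Any minimizer of the Rayleigh quotient solves the eigenvalue equation $\Delta w=\lambda_1^D w$ as its Euler--Lagrange equation, so by interior elliptic regularity it is smooth in $\Omega$. Applying this to the nonnegative minimizer $\lvert u_1\rvert$ from the previous step, the Harnack inequality for nonnegative solutions of this uniformly elliptic equation yields $\lvert u_1\rvert>0$ throughout the connected interior; thus $u_1$ itself never vanishes in $\Omega$. The same reasoning applies to an arbitrary eigenfunction $w$ for $\lambda_1^D$: since $\lvert w\rvert$ is again a minimizer, it is positive in the interior, so $w$ has no interior zeros and therefore a definite sign. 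Finally, if $v$ were an eigenfunction independent of $u_1$, then $w=v-c\,u_1$ with $c=\bigl(\int_\Omega u_1 v\bigr)\big/\bigl(\int_\Omega u_1^2\bigr)$ would be a nonzero eigenfunction orthogonal to $u_1$; but $w$ has a definite sign while $u_1>0$ in the interior, so $\int_\Omega u_1 w\ne 0$, contradicting orthogonality. Hence the eigenspace of $\lambda_1^D$ is one-dimensional and $\lambda_1^D<\lambda_2^D$.

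The hard part will be the strict-positivity step, as it is what both forces every eigenfunction to be of one sign and rules out higher multiplicity. Fortunately this is an interior statement, so the mixed Dirichlet--Neumann boundary plays no role: the usual Harnack/strong-maximum-principle machinery for the uniformly elliptic equation with bounded zeroth-order coefficient applies verbatim, regardless of the sign convention in the eigenvalue equation. The only care required is the routine verification that a ground state has sufficient interior regularity to invoke it, which follows from standard elliptic theory.
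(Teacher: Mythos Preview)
Your proof is correct and follows essentially the same line as the paper's: positivity via the constant-function argument, a nonnegative ground state via $|u_1|$, and simplicity by showing that every first eigenfunction must have a fixed sign and then reaching a contradiction through a suitable linear combination. The only cosmetic difference is that you invoke the Harnack inequality for interior positivity, whereas the paper uses the minimum principle (noting $\Delta|u_1|=-\lambda_1^D|u_1|\le 0$), and you spell out the orthogonality contradiction while the paper simply says a linear combination of two signed eigenfunctions would change sign.
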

\begin{proof}
  Suppose $u_1$ is changing sign. Then $|u_1|$ is a different minimizer of the Rayleigh quotient. Any minimizer of the Rayleigh quotient is an eigenfunction (see \cite{Ba80} or a more recent exposition \cite[Chapter 9]{Lnotes}). But $\Delta |u_1|=-\lambda_1 |u_1|\le 0$, hence minimum principle ensures $u_1$ cannot equal zero at any inside point of the domain, giving contradiction. Hence $u_1$ has a fixed sign. If there were two eigenfunctions for $\lambda_1^D$, we could make a linear combination that changes sign, which is not possible. Hence the smallest eigenvalue is simple. Finally, $\lambda_1^D=0$ would imply that $|\partial u_1|= 0$ a.e., hence the eigenfunction is constant. But it equals $0$ on $D$, hence $u_1\equiv 0$.
\end{proof}

If $D_1\subset D_2$ then \eqref{RayleighDir} implies that $\lambda_1^{D_1}\le \lambda_1^{D_2}$. Indeed, any test function $u$ that satisfies $u=0$ on $D_2$, can be used in the minimization of $\lambda_1^{D_1}$. However, the relation between e.g. $\lambda_1^L$ and $\lambda_1^M$ is not clear.

In the second part of the paper we will consider rhombi $R$ created by reflecting a right triangle $T$ four times. 
\begin{lemma}\label{le:extension}
  Let $u$ belong to $\lambda_1^D(T)$ or $\mu_2(T)$. Let $\bar u$ be the extension of $u$ to $R$, that is symmetric with respect to the sides of $T$ with Neumann condition and antisymmetric with respect to the Dirichlet sides. Then $\bar u$ is an eigenfunction of $R$. Furthermore, if $v$ is another eigenfunction of $R$ with the same symmetries as $\bar u$, then $v$ belongs to higher eigenvalue than $\bar u$, or $v= C \bar u$ for some constant $C$.
\end{lemma}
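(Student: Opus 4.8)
The plan is to recast everything as a statement about the Rayleigh quotient on $R$ restricted to a symmetry-adapted subspace, so that both assertions follow from the variational characterizations \eqref{RayleighDir}--\eqref{RayleighNeu} together with \autoref{lem:positive}. Let $\sigma_1,\sigma_2$ be the two reflections of $R$ across the diagonals that carry the legs of $T$; these generate a Klein four-group with $T$ as a fundamental domain. First I would record the dictionary between a leg's boundary condition and the parity of the extension across it: an even function has odd normal derivative, hence vanishing normal derivative on the line, so even (symmetric) extension across a leg corresponds to the Neumann condition there, while odd (antisymmetric) extension makes the function vanish on the line and corresponds to the Dirichlet condition. The condition on the hypotenuse is inherited unchanged by $\partial R$, so $R$ carries a pure Neumann problem when the hypotenuse is Neumann and a pure Dirichlet problem when it is Dirichlet.

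Next I would verify that $u\mapsto\bar u$ is a linear isomorphism from the admissible space of $u$ on $T$ onto the isotypic subspace $\mathcal H\subset H^1(R)$ singled out by the prescribed parities, with inverse given by restriction to $T$. The only point needing care is that $\bar u\in H^1(R)$: across each diagonal the extension is continuous (automatically in the even case, and in the odd case because $u=0$ on the Dirichlet leg), and a piecewise $H^1$ function continuous across these interfaces lies in $H^1(R)$; when the hypotenuse is Dirichlet this places $\bar u$ in $H^1_0(R)$. Since $\sigma_1,\sigma_2$ are isometries, $\int_R|\nabla\bar u|^2=4\int_T|\nabla u|^2$ and $\int_R\bar u^2=4\int_T u^2$, so the map preserves the Rayleigh quotient.

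The heart of the argument is that the four isotypic components $\mathcal H_{\pm\pm}$ of $H^1(R)$ are mutually orthogonal for both the $L^2$ and the Dirichlet inner products: if $f,g$ have opposite parity under some $\sigma_i$, then the change of variables $x\mapsto\sigma_i x$ yields $\int_R fg=-\int_R fg$ and $\int_R\nabla f\cdot\nabla g=-\int_R\nabla f\cdot\nabla g$, both of which vanish. Consequently a critical point of the Rayleigh quotient within $\mathcal H$ is automatically a critical point on all of $H^1(R)$: in the first variation at $\bar u$ the contributions of any component orthogonal to $\mathcal H$ drop out, so the Euler--Lagrange equation holds against every test function. Because $u$ is a mixed eigenfunction on $T$, it is an (unconstrained) critical point of the Rayleigh quotient on its admissible space, hence $\bar u$ is a critical point within $\mathcal H$, and therefore an eigenfunction of $R$; interior elliptic regularity then makes $\bar u$ smooth across the diagonals and at the center at no extra cost. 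This settles the first assertion.

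For the minimality claim, note that $u$ is in fact the \emph{minimizer} of the Rayleigh quotient over its admissible space — over $\{u=0\text{ on }D\}$ in the $\lambda_1^D$ case and over $\{\int_T u=0\}$ in the $\mu_2$ case — so $\bar u$ minimizes the Rayleigh quotient over $\mathcal H$ (respectively over the mean-zero part of $\mathcal H_{++}$, using $\int_R\bar u=4\int_T u$). Thus any eigenfunction $v\in\mathcal H$ of $R$ has Rayleigh quotient at least that of $\bar u$, with equality only if $v$ is again such a minimizer; the simplicity of the minimizer — from \autoref{lem:positive} in the $\lambda_1^D$ case and from the strict inequality $\mu_2<\mu_3$ (a right triangle is never equilateral) in the $\mu_2$ case — then forces $v=C\bar u$. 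I expect the main obstacle, and the point deserving an explicit remark, to be the $\mu_2$ case: there $\bar u$ is doubly symmetric, a parity shared by the constant Neumann eigenfunction of $R$, which sits strictly below $\bar u$ at $\mu_1=0$. The statement is therefore correct only once one restricts to competitors of mean zero, exactly the constraint built into \eqref{RayleighNeu}; with that understood, $\bar u$ is the lowest nonconstant doubly symmetric mode.
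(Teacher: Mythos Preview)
Your proof is correct and rests on the same ingredients as the paper's---the Rayleigh-quotient-preserving bijection between the admissible space on $T$ and an isotypic subspace $\mathcal H\subset H^1(R)$, together with simplicity of $\lambda_1^D$ and $\mu_2$ on $T$---but the organization differs. The paper argues by \emph{restriction}: any eigenfunction $v$ of $R$ with the prescribed symmetries restricts to an eigenfunction on $T$, so by simplicity $v|_T=Cu$ or $v$ lies at a higher eigenvalue; then, since $\bar u$ attains the minimal Rayleigh quotient over $\mathcal H$ and that minimum cannot lie strictly below the eigenvalue on $T$, $\bar u$ must itself be an eigenfunction. You instead argue by \emph{lifting}: the isotypic components are mutually orthogonal for both the $L^2$ and the Dirichlet inner products, so a critical point of the Rayleigh quotient within $\mathcal H$ is automatically critical on all of $H^1(R)$, which establishes directly that $\bar u$ is an eigenfunction without first invoking minimality. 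Your orthogonality route is a slightly more structural way to see the eigenfunction claim, and your explicit discussion of the constant mode in the $\mu_2$ case (where the doubly symmetric parity is shared by $\mu_1=0$) is more careful than the paper's brief treatment.
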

\begin{proof}
  Suppose $v$ is an eigenfunction of $R$ with the same symmetries as $u$. Its restriction to $T$ satisfies Dirichlet and Neumann conditions on the same sides as $u$. It also satisfies the eigenvalue equation pointwise on $T$. Hence $v$ is an eigenfunction on $T$. However, $\lambda_1^D$ and $\mu_2$ are simple, hence $v=C u$, or $v$ belongs to a higher eigenvalue on $T$. 

  The extension $\bar u$ has the same Rayleigh quotient on $R$, as on $T$ (due to symmetries). Hence $\bar u$ can be used as a test function for the lowest eigenvalue on $R$ with the symmetries of $\bar u$. Hence that eigenvalue of $R$ must be smaller or equal to the eigenvalue of $u$ on $T$. However, it cannot be smaller by the argument from the previous paragraph.
\end{proof}

In particular, this lemma implies that 
\begin{align*}
  \lambda_1(R) = \lambda_1^L(T).
\end{align*}
We can also claim that $\mu_2(T)$ equals the smallest Neumann eigenvalue of the rhombus with a doubly symmetric eigenfunction. However, this eigenvalue will not be second on $R$, due to the presence of possibly lower antisymmetric modes (corresponding to $\lambda_1^M(T)$ and $\lambda_1^S(T)$).

\section{Inequalities between mixed eigenvalues of right triangles}\label{sec:mixed}
In this section we prove \autoref{thm:order}. We split the proof into several sections, each treating one or two inequalities. Each section introduces a different technique of proving eigenvalue bounds.

Before we proceed we wish to make a few remarks.

\begin{remark}
All eigenvalues of the right isosceles triangle can be explicitly calculated using eigenfunctions of the square. Obviously $S=M$ in this case, hence some eigenvalue inequalities from \autoref{thm:order} become obvious equalities. Furthermore $\mu_2 = \lambda_1^L$, as can be seen by taking two orthogonal second Neumann eigenfunctions of the unit square with the diagonal nodal lines. One corresponds to $\mu_2$ on the right triangle, the other $\lambda_1^L$.
\end{remark}

\begin{remark}
  Similarly, \textbf{some} of the mixed eigenvalues of the half-of-equilateral triangle can be calculated explicitly using eigenfunctions of the equilateral triangle. In particular $\lambda_1^M = \mu_2$, since the corresponding equilateral triangle has double second Neumann eigenvalue. On the other hand, any mixed case that leads to a mixed case on the equilateral triangle cannot be explicitly calculated. In particular the value of $\lambda_1^{LS}$ on the half-of-equilateral triangle corresponds to equilateral triangle with Dirichlet condition on two sides. The eigenfunction is not trigonometric (as all other known cases), and to the best of our knowledge there is no closed formula for the eigenvalue. 
\end{remark}

For a thorough overview of the explicitly computable cases and the geometric properties of eigenfunction we refer the reader to \cite{GN13} and references therein.

\begin{remark}\label{rem:trap}
  Note that for the trapezium with vertices $(-3,0)$, $(3,0)$, $(3,2)$ and $(0,2)$ imposing the Dirichlet condition on the sloped side leads to smaller eigenvalue than imposing it on the top (numerically).  
\end{remark}

  \subsection{For nonisosceles right triangles: $\lambda_1^S<\lambda_1^M$. Unknown trial function method for isosceles triangles.}\label{sec:unknown}
  In this subsection $O(\beta)$ is an obtuse isosceles triangle with equal sides of length $1$ and aperture angle $2\beta$, with vertices $A, B, C$ equal $(0,h)$, $(\pm\sqrt{1-h^2},0)$, respectively. Let $A(\alpha)$ be an acute isosceles triangle with vertices $A, D, C$ equal $(0,\pm h)$, $(\sqrt{1-h^2},0)$ (aperture angle $2\alpha$). See \autoref{fig:triangles} for both triangles. Finally their intersection is a right triangle $T=AEC$. Note that $h$ and both angles are related by:
  \begin{align*}
  	h=\sin\alpha=\cos \beta,   
  \end{align*}
  and $\beta>\pi/4$.

  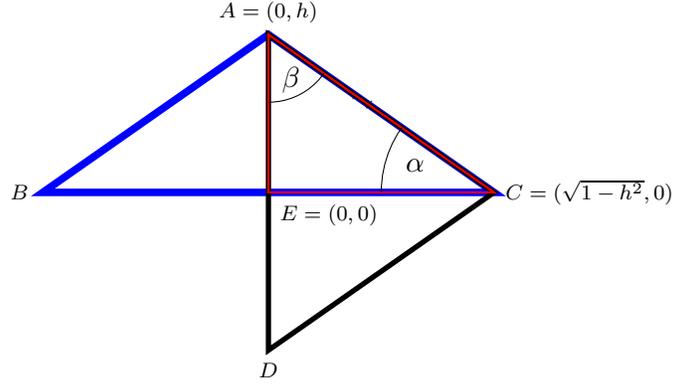
\begin{figure}[t]
  \begin{center}
    \begin{tikzpicture}[scale=3,rotate=-90]
      \draw[blue,line width=1mm] (0,-1) node [left,black] {\tiny $B$}-- (0,1) -- (-0.7,0) node [sloped, above,pos=0.5,black] {\small $1$}-- cycle;
      \draw[black,line width=0.7mm] (-0.7,0) -- (0.7,0) node [below] {\tiny $D$} -- (0,1) -- cycle;
      \draw[red,thick] (-0.7,0) node[above,black] {\tiny $A=(0,h)$} -- (0,0)  node [below right,black] {\tiny $E=(0,0)$} -- (0,1) node[right,black] {\tiny $C=(\sqrt{1-h^2},0)$} -- cycle;
      \clip (-0.7,0) -- (0,0) -- (0,1) -- cycle;
      \draw (-0.7,0) circle (0.3);
      \draw (-0.5,0.1) node {\small $\beta$};
      \draw (0,1) circle (0.5);
      \draw (-0.12,0.65) node {\small $\alpha$};
    \end{tikzpicture}
  \end{center}
  \caption{Obtuse isosceles triangle $O(\beta)=ABC$ and acute isosceles triangle $A(\alpha)=ADC$.}
  \label{fig:triangles}
\end{figure}
We need the following three lemma:

  \begin{lemma}\label{lemobt}
  For $\beta>\pi/4$
  \begin{align*}
    \mu_2(O(\beta))<\frac{\pi^2}{4h^2(1-h^2)}
  \end{align*}
  And the bound saturates for the right isosceles triangle ($h^2=1/2$ or $\beta=\pi/4$).
\end{lemma}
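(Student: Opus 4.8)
The plan is to prove the bound by exhibiting a single explicit admissible trial function in the variational characterization \eqref{RayleighNeu} and evaluating its Rayleigh quotient exactly. Since $O(\beta)$ is isosceles, its second Neumann mode is either symmetric or antisymmetric across the altitude dropped from the apex $A$; the saturating case $\beta=\pi/4$ (the right isosceles triangle, half of a square) shows that the relevant mode is the antisymmetric one. I would therefore look for an antisymmetric trial function, which has the double advantage of automatically satisfying $\int_{O}u=0$ and of degenerating to the genuine eigenfunction at $\beta=\pi/4$.

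First I would pass to affine (oblique) coordinates $(\xi,\eta)$ measured as distances from $A$ along the two equal sides, so that $O(\beta)$ becomes the fixed reference triangle $\Delta=\{\xi,\eta\ge0,\ \xi+\eta\le1\}$ independent of $\beta$, while all the $\beta$-dependence is carried by the metric $g=\begin{pmatrix}1&\cos2\beta\\\cos2\beta&1\end{pmatrix}$ (the apex angle being $2\beta$). Thus $dA=\sqrt{\det g}\,d\xi\,d\eta=\sin2\beta\,d\xi\,d\eta$, while $|\nabla u|^2=\frac{1}{\sin^2 2\beta}\big[(\partial_\xi u)^2+(\partial_\eta u)^2-2\cos2\beta\,\partial_\xi u\,\partial_\eta u\big]$. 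I would take $u=\cos\pi\xi-\cos\pi\eta$, which is antisymmetric under the swap $\xi\leftrightarrow\eta$ (reflection in the altitude), hence has zero mean, and which at $\beta=\pi/4$ is exactly the restriction to $O(\pi/4)$ of the second Neumann eigenfunction $\cos\pi X+\cos\pi Y$ of the unit square.

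Evaluating the Rayleigh quotient $R[u]=\int_{O}|\nabla u|^2\big/\int_{O}u^2$ then reduces to four elementary integrals over the fixed triangle $\Delta$. The Jacobian $\sin2\beta$ from $dA$ combines with the factor $\sin^{-2}2\beta$ in $|\nabla u|^2$ to leave an overall $\pi^2/\sin^2 2\beta$, and the cross term coming from the off-diagonal entry of $g^{-1}$ is the decisive feature: because $u$ is a \emph{difference} of cosines, $\partial_\xi u\,\partial_\eta u$ integrates to a positive multiple of $\cos2\beta$. Carrying this out I expect
$$R[u]=\frac{\pi^2+8\cos2\beta}{\sin^2 2\beta}=\frac{\pi^2}{\sin^2 2\beta}+\frac{8\cos2\beta}{\sin^2 2\beta}.$$
Since $\mu_2(O(\beta))\le R[u]$ and $4h^2(1-h^2)=4\cos^2\beta\sin^2\beta=\sin^2 2\beta$, the stated bound follows. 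For $\pi/4<\beta<\pi/2$ one has $\cos2\beta<0$, so the correction term is strictly negative and the inequality is strict; at $\beta=\pi/4$ the correction vanishes, $u$ is the genuine eigenfunction, and $R[u]=\mu_2=\pi^2$, which is exactly the claimed saturation.

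The main point to get right — and the one place the argument could silently fail — is the choice of the antisymmetric combination $\cos\pi\xi-\cos\pi\eta$ rather than the symmetric $\cos\pi\xi+\cos\pi\eta$: only the former makes the cross term carry $\cos2\beta$ with the favorable sign, pushing the quotient \emph{below} $\pi^2/\sin^2 2\beta$ for obtuse apex angles. The symmetric combination is not even admissible here (it has nonzero mean over $O(\beta)$), and its quotient in fact lies above the target, so the sign bookkeeping in the oblique metric is genuinely what drives the estimate. Everything else is routine integration over $\Delta$, and the degeneracy $\cos2\beta=0$ at the right-isosceles value $\beta=\pi/4$ is what simultaneously produces the equality case.
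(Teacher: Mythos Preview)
Your argument is correct and is in fact the very same proof as the paper's, written in different coordinates: the paper linearly transplants the $\mu_2$-eigenfunction $\varphi(x,y)=\sin(\pi x/2)\cos(\pi y/2)$ of the right isosceles triangle via $(x,y)\mapsto(x/\sqrt{1-h^2},y/h)$, and under your oblique change of variables this transplanted function becomes (a constant multiple of) $\cos\pi\xi-\cos\pi\eta$, so both compute the identical Rayleigh quotient $\dfrac{\pi^2+8\cos2\beta}{\sin^2 2\beta}=\dfrac{\pi^2+16h^2-8}{4h^2(1-h^2)}$.
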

\begin{proof}
  Note that $h^2< 1/2$. Take the second eigenfunction for the right isosceles triangle $(0,1)$, $(\pm 1,0)$ and deform it linearly to fit $O(\beta)$. That is take
  \begin{align*}
    \varphi=\sin(\pi x/2)\cos(\pi y/2)
  \end{align*}
  and compose with the linear transformation $L(x,y)=(x/\sqrt{1-h^2},y/h)$. Resulting function can be used as a test function for $\mu_2(O(\beta))$
  \begin{align*}
    \mu_2(O(\beta))\le \frac{\int_{T(\beta)} |\nabla (\varphi\circ L)|^2}{\int_{T(\beta)} |\varphi\circ L|^2}=\frac{\pi^2+16h^2-8}{4h^2(1-h^2)}< \frac{\pi^2}{4h^2(1-h^2)}
  \end{align*}
\end{proof}

\begin{lemma}\label{interval}
  Let $u$ be any antisymmetric function on $A(\alpha)$ (so that $u(x,-y)=-u(x,y)$). Then
  \begin{align*}
    \int_{A(\alpha)}u_y^2> \frac{\pi^2}{4h^2} \int_{A(\alpha)} u^2.
  \end{align*}
\end{lemma}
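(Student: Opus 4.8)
The name of the lemma strongly suggests, and the plan is to confirm, that the two-dimensional estimate follows by slicing $A(\alpha)$ into vertical segments and applying a one-dimensional Poincar\'e inequality on each. Concretely, for $x\in(0,\sqrt{1-h^2})$ the cross-section of $A(\alpha)$ is the interval $y\in(-g(x),g(x))$, where $g(x)=h\bigl(1-x/\sqrt{1-h^2}\bigr)$ is the linear profile running from the half-width $h$ at the base ($x=0$) down to $0$ at the apex. By Fubini's theorem both sides of the claimed inequality decompose as $\int_0^{\sqrt{1-h^2}}\bigl(\int_{-g(x)}^{g(x)}(\cdot)\,dy\bigr)\,dx$, so it suffices to control each inner slice integral.

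For the slice estimate I would use the following one-dimensional fact: if $f\in H^1(-a,a)$ is antisymmetric, then $f(0)=0$, and restricting to $(0,a)$, the Rayleigh-quotient characterization of the smallest eigenvalue of the mixed problem $-f''=\nu f$ with $f(0)=0$ and $f'(a)=0$ gives $\int_0^a f_y^2\ge (\pi/2a)^2\int_0^a f^2$, with minimizer $\sin(\pi y/2a)$. Doubling and using antisymmetry yields $\int_{-a}^a f_y^2\ge \frac{\pi^2}{4a^2}\int_{-a}^a f^2$. Applying this to $f(y)=u(x,y)$ on each slice (for a.e.\ $x$ the slice function lies in $H^1$ of the interval and inherits antisymmetry from $u(x,-y)=-u(x,y)$), with $a=g(x)$, gives $\int_{-g(x)}^{g(x)} u_y^2\,dy\ge \frac{\pi^2}{4g(x)^2}\int_{-g(x)}^{g(x)} u^2\,dy$.

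The final step is to replace the slice-dependent constant by the uniform one. Since $g(x)\le h$ with strict inequality for every $x>0$, the slice constant satisfies $\pi^2/(4g(x)^2)>\pi^2/(4h^2)$, so $\int_{-g(x)}^{g(x)} u_y^2\,dy>\frac{\pi^2}{4h^2}\int_{-g(x)}^{g(x)} u^2\,dy$ whenever the inner $L^2$ mass is positive; integrating over $x$ and invoking Fubini once more produces the stated inequality. The one genuinely delicate point --- which I expect to be the main obstacle --- is the \emph{strictness}: one must observe that for any $u\not\equiv0$ the slice mass $\int_{-g(x)}^{g(x)}u^2\,dy$ is positive on a set of $x$ of positive measure, on which moreover $g(x)<h$, so the strict slice inequalities survive integration. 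The degenerate case $u\equiv0$ is excluded, since the asserted strict inequality $0>0$ is false, and the lemma is understood for nontrivial $u$. The remaining regularity issues (the validity of slicing and the a.e.\ membership $u(x,\cdot)\in H^1$) are routine consequences of $u\in H^1(A(\alpha))$.
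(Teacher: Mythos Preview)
Your argument is correct and follows essentially the same slicing strategy as the paper: fix $x$, use the one-dimensional Poincar\'e inequality on the vertical segment, then integrate in $x$. The paper phrases the one-dimensional step as ``odd $u(x,\cdot)$ is a test function for $\mu_2$ of the Neumann problem on $[-c_x,c_x]$'', while you reduce by antisymmetry to the mixed Dirichlet--Neumann eigenvalue on $(0,g(x))$; these are equivalent. Your treatment of strictness (noting $g(x)<h$ for $x>0$ and that a nontrivial $u$ must carry positive slice mass on a set of positive measure) is in fact more careful than the paper's, which simply writes ``$>$'' in the statement but only exhibits ``$\ge$'' slice by slice.
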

\begin{proof}
  Note that for fixed $x$ function $u(x,\cdot)$ is odd, hence it can be used as a test function for the second Neumann eigenvalue on any vertical interval contained in the triangle $A(\alpha)$. We get the largest interval $[-h,h]$ when $x=0$. Hence 
  \begin{align*}
    \int_{[-c_x,c_x]} u_y^2(x,y)\,dy\ge \mu_2([-c_x,c_x])\int_{[-c_x,c_x]} u^2(x,y)\,dy \ge \frac{\pi^2}{4h^2} \int_{[-c_x,c_x]} u^2(x,y)\,dy.
  \end{align*}
  Integrate over $x$ to get the result.
\end{proof}

A special case of \cite[Corollary 5.5]{LSminN}, noting that $\alpha<\beta$, can be stated as
\begin{lemma}\label{unknown}
     Let $u$ be the eigenfunction belonging to $\mu_2(A(\alpha))$. Then $\mu_2(O(\beta))< \mu_2(A(\alpha))$ if
    \begin{align}
      \frac{\int_{A(\alpha)}u_y^2}{\int_{A(\alpha)}u_x^2}>\tan^2(\beta).\label{eq:cond}
    \end{align}
\end{lemma}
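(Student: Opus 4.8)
The plan is to run the unknown trial function method directly: transplant the (never explicitly known) eigenfunction $u$ of $\mu_2(A(\alpha))$ onto $O(\beta)$ by a single linear map and read off the resulting Rayleigh bound. Writing $a=\sqrt{1-h^2}=\cos\alpha=\sin\beta$, the two isosceles triangles are linearly equivalent: the involution
\begin{align*}
  \Phi(x,y)=\Bigl(\tfrac{a}{h}\,y,\ \tfrac{h}{a}\,x\Bigr)
\end{align*}
sends the vertices $(0,\pm h),(a,0)$ of $A(\alpha)$ to the vertices $(\pm a,0),(0,h)$ of $O(\beta)$, so that $\Phi(A(\alpha))=O(\beta)$. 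The reason for choosing this particular map is twofold: $\det\Phi=-1$, so $\Phi$ is area preserving, and the linear part of $\Phi$ is anti-diagonal, so the transplanted gradient energy will split into separate $u_x^2$ and $u_y^2$ pieces with no cross term.

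First I would set $v=u\circ\Phi^{-1}\in H^1(O(\beta))$. Because $|\det\Phi|=1$, the change of variables gives $\int_{O(\beta)}v=\int_{A(\alpha)}u=0$ and $\int_{O(\beta)}v^2=\int_{A(\alpha)}u^2$, so $v$ is admissible in the variational characterization \eqref{RayleighNeu} of $\mu_2(O(\beta))$. Next I would compute the transplanted Dirichlet energy: since $\Phi$ is an involution, the chain rule together with the substitution $\mathbf x=\Phi\mathbf y$ yields
\begin{align*}
  \int_{O(\beta)}|\nabla v|^2
  =\Bigl(\tfrac{a}{h}\Bigr)^{2}\!\int_{A(\alpha)}u_x^2
  +\Bigl(\tfrac{h}{a}\Bigr)^{2}\!\int_{A(\alpha)}u_y^2,
\end{align*}
the anti-diagonal structure being precisely what annihilates the mixed $u_xu_y$ term.

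Feeding this into \eqref{RayleighNeu}, and writing bare integrals over $A(\alpha)$ from now on, gives
\begin{align*}
  \mu_2(O(\beta))\le\frac{\int_{O(\beta)}|\nabla v|^2}{\int_{O(\beta)}v^2}
  =\frac{(a/h)^2\int u_x^2+(h/a)^2\int u_y^2}{\int u^2},
\end{align*}
while $u$ being the eigenfunction gives $\mu_2(A(\alpha))=(\int u_x^2+\int u_y^2)/\int u^2$. Hence $\mu_2(O(\beta))<\mu_2(A(\alpha))$ will follow once the numerators obey $(a/h)^2\int u_x^2+(h/a)^2\int u_y^2<\int u_x^2+\int u_y^2$. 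Since $a/h=\cot\alpha=\tan\beta$, setting $\tau=\tan^2\beta$ turns this into $(\tau-1)\int u_x^2<(1-\tau^{-1})\int u_y^2=\tfrac{\tau-1}{\tau}\int u_y^2$. This is exactly where the hypothesis enters: $\alpha<\beta$ is equivalent to $\beta>\pi/4$, hence $\tau>1$ and $\tau-1>0$, so dividing through by $\tau-1$ preserves the inequality and leaves $\int u_x^2<\tau^{-1}\int u_y^2$, i.e.\ $\int u_y^2/\int u_x^2>\tan^2\beta$, which is condition \eqref{eq:cond}.

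The argument is otherwise just bookkeeping, so I expect the only real obstacle to be isolating the right transplantation map. One must choose $\Phi$ to be at once area preserving, so that the mean-zero constraint and the $L^2$ norm transfer for free and $v$ is admissible with no correction term, and anti-diagonal, so that the gradient form diagonalizes and the threshold emerges cleanly as $\tan^2\beta$ rather than as a messier quotient contaminated by an $\int u_xu_y$ term. Once that map is identified, the single point where the geometry $\beta>\pi/4$ (equivalently $\alpha<\beta$) is genuinely used is the sign of $\tau-1$, which fixes the direction of the final inequality.
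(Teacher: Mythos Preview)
Your argument is correct. The paper itself does not prove this lemma; it simply records it as a special case of \cite[Corollary~5.5]{LSminN} (the ``unknown trial function'' result for Neumann eigenvalues of isosceles triangles). What you have done is unpack that corollary in the present coordinates: the linear map $\Phi(x,y)=\bigl(\tfrac{a}{h}y,\tfrac{h}{a}x\bigr)$ is precisely the area-preserving transplant used there, and your algebra reproduces the threshold $\tan^2\beta$. So your approach is not genuinely different from the cited source, but it has the virtue of being self-contained rather than deferring to an external reference. The one place to be slightly more explicit is the strictness: you obtain $\mu_2(O(\beta))\le R[v]$ from \eqref{RayleighNeu}, and then the strict hypothesis \eqref{eq:cond} makes $R[v]<\mu_2(A(\alpha))$, which together give the strict conclusion; you do this implicitly but it is worth saying in one line.
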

Suppose the condition \eqref{eq:cond} is false (hence we cannot conclude that $\mu_2(O(\beta))< \mu_2(A(\alpha))$). That is
 \begin{align*}
   \int_{A(\alpha)}u_y^2\le \tan^2(\beta)\int_{A(\alpha)}u_x^2
    \end{align*}
Then
\begin{align*}
  \mu_2(A(\alpha))&=\frac{\int_{A(\alpha)} u_x^2+u_y^2}{\int_{A(\alpha)} u^2}\ge \left( 1+\frac1{\tan^2(\beta)} \right)\frac{\int_{A(\alpha)} u_y^2}{\int_{A(\alpha)} u^2}
  > \frac{1}{\sin^2(\beta)} \frac{\pi^2}{4h^2}=
  \\&=\frac{\pi^2}{4h^2(1-h^2)}>\mu_2(O(\beta)),
\end{align*}
where the last inequality in the first line follows from \autoref{interval}, while the inequality in the second line follows from \autoref{lemobt}.

Therefore, regardless if we can apply \autoref{unknown} or not (condition \eqref{eq:cond} is true or false), we get 
\begin{align*}
\mu_2(O(\beta))<\mu_2(A(\alpha)).
\end{align*}

Since $O(\beta)$ is obtuse and isosceles, \cite[Theorem 3.2]{LSminN} implies that $\lambda_1^S(T) = \mu_2(O(\beta))$.
The eigenfunction for $\lambda_1^M(T)$ extends to an antisymmetric eigenfunction on $A(\alpha)$, hence $\mu_2(A(\alpha))\le \lambda_1^M(T)$. 

Therefore we proved that $\lambda_1^S< \lambda_1^M$ for any nonisosceles right triangle.

  \subsection{For right triangles: $\lambda_1^M<\mu_2$ if and only if $\alpha>\pi/6$. Comparison of Neumann eigenfunctions of an isosceles triangle. }

  In this section we will use the notation introduced in \cite[Section 3]{LSminN}. All isosceles triangles can be split into equilateral triangles, subequilateral triangles (with angle between equal sides less than $\pi/3$), and superequilateral (with the angle above $\pi/3$).

  Note that $\alpha=\pi/6$ means that we are working with a half of an equilateral triangle. The eigenvalues are explicit and $\lambda_1^M=\mu_2$.

  Mirroring a right triangle along middle side $M$ gives a superequilateral triangle if and only if $\alpha>\pi/6$. Any superequilateral triangle has antisymmetric second Neumann  eigenfunction \cite[Theorem 3.2]{LSminN} and simple second eigenvalue \cite{Mi13} equal $\lambda_1^M(T)$. This proves that $\lambda_1^M(T)<\mu_2(T)$. 
  
  At the same time any subequilateral triangle has symmetric second eigenfunction \cite[Theorem 3.1]{LSminN}, with simple eigenvalue \cite{Mi13} equal $\mu_2(T)$. Hence $\lambda_1^M>\mu_2(T)$ if $\alpha>\pi/6$.

  \subsection{For right triangles: $\mu_2<\lambda_1^L<\lambda_1^{MS}$. Variational approach and domain monotonicity.}
  Assume that the right triangle $T$ has vertices $(0,0)$, $(1,0)$ and $(0,b)$. We can use four such right triangles to build a rhombus $R$. Then $\lambda_1^L=\lambda_1(R)$, since reflected eigenfunction for $\lambda_1^L$ is nonnegative and satisfies Dirichlet boundary condition on $R$ (see \autoref{le:extension}). Hooker and Protter \cite{HP61} proved the following lower bound for the ground state of rhombi
  \begin{align}\label{rhombus}
  \lambda_1^L=\lambda_1(R)\ge \frac{\pi^2(1+b)^2}{4b^2}.
\end{align}
We need to prove an upper bound for $\mu_2$ that is smaller than this lower bound.

Consider two eigenfunctions of the right isosceles triangle with vertices $(0,0)$, $(1,0)$ and $(0,1)$
\begin{align*}
  \varphi_1(x,y)=\cos(\pi y)-\cos(\pi x),\\
  \varphi_2(x,y)=\cos(\pi y)\cos(\pi x).
\end{align*}
The first one belongs to $\mu_2$ and is antisymmetric, the second belongs to $\mu_3$ and is symmetric. In fact all we need is that these integrate to $0$ over the right isosceles triangle. Consider a linear combination of the linearly deformed eigenfunctions
\begin{align*}
  f(x,y)=\varphi_1(x,y/b)-(1-b)\varphi_2(x,y/b),
\end{align*}
where $0<b<1$. This function integrates to $0$ over the right triangle $T$, hence it can be used as a test function for $\mu_2$ in \eqref{RayleighNeu}. As a result we get the following upper bound
\begin{align}\label{isobound}
  \mu_2\le \frac{3\pi^2 ((b-1)^2+2)(b^2+1)-64(b-1)^2(b+1) }{3b^2((b-1)^2+4)}
\end{align}
Note that when $b=1$ bounds \eqref{rhombus} and \eqref{isobound} reduce to the same value. In fact $\lambda_1^L=\mu_2$ in this case (right isosceles triangle).

Moreover
\begin{align*}
  &\frac{3\pi^2 ((b-1)^2+2)(b^2+1)-64(b-1)^2(b+1) }{3b^2((b-1)^2+4)}
  -
  \frac{\pi^2(1+b)^2}{4b^2}=
  \\&\qquad\qquad=
  \frac{(b-1)^2}{12b^2( (b-1)^2+4)} (9\pi^2 b^2-(256+6\pi^2)b+21\pi^2-256),
\end{align*}
and the quadratic expression in $b$ is negative for $b\in(0,1)$. Therefore $\mu_2<\lambda_1^L$.

 For right triangles, $\lambda_1^L$ is the same as $\lambda_1$ for a rhombus built from four triangles, while $\lambda_1^{MS}$ is the same as $\lambda_1$ of a kite built from two right triangles. Sharpest angle of the kite is the same as the acute angle of the rhombus. We can put the kite inside of the rhombus by putting the vertex of the sharpest angle at the vertex of the rhombus. Therefore $\lambda_1^L<\lambda_1^{MS}$ by domain monotonicity (take the eigenfunction of the kite, extend with $0$, and use as trial function on the rhombus).

\subsection{For arbitrary triangle: $\min\{\lambda_1^S,\lambda_1^M,\lambda_1^L\}<\mu_2\le \lambda_{1}^{MS}$. Nodal line consideration and eigenvalue comparisons.}

To get a lower bound for $\mu_2$ we will define a trial function based on the Neumann eigenfunction, without knowing its exact form, and use it as a trial function for a mixed eigenvalue problem. Note that the eigenfunction of $\mu_2$ has exactly two nodal domains, by Courant's nodal domain theorem (\cite[Sec. V.5, VI.6]{CH53}) and orthogonality to the first constant eigenfunction. Hence the closure of at least one of these nodal domains must have empty intersection with the interior of one of the sides (nodal line might end in a vertex, but the eigenfunction must have fixed sign on at least one side). Let us call this side $D$ and consider $\lambda_1^D$. Let $u$ be the eigenfunction of $\mu_2$ restricted to the nodal domain not intersecting $D$. Extend $u$ with $0$ to the whole triangle $T$. We get a valid trial function for $\lambda_1^D$. Hence $\min\{\lambda_1^S,\lambda_1^M,\lambda_1^L\}\le \lambda_1^D<\mu_2$.

Note that we already proved that for right triangles $\lambda_1^S<\lambda_1^M$, $\mu_2<\lambda_1^L$, and $\lambda_1^M < \mu_2$ if and only if smallest angle $\alpha>\pi/6$. Hence for right triangles the minimum can be replaced by $\lambda_1^S$, or even $\lambda_1^M$ if $\alpha>\pi/6$.

Note also, that the result of this section generalizes to arbitrary polygons.
\begin{lemma}
  The smallest nonzero Neumann eigenvalue on a polygon with $2n+1$ or $2n+2$ sides is bounded below by the minimum of all mixed Dirichlet-Neumann eigenvalues with Dirichlet condition applied to at least $n$ consecutive sides.

Furthermore, for arbitrary domain, the Neumann eigenvalue is bounded below by the infimum over all mixed eigenvalue problems with half of the boundary length having Dirichlet condition applied to it.
\end{lemma}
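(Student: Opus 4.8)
The plan is to mimic the nodal-domain argument given just above the lemma for triangles, upgrading "one clean side" to "$n$ consecutive clean sides," and to replace the counting of sides by a measurement of boundary length for the general statement. Throughout, fix an eigenfunction $u$ belonging to $\mu_2$ and record the structural facts: by Courant's nodal domain theorem together with orthogonality of $u$ to the constant first mode, $u$ has exactly two nodal domains $\Omega_+$ and $\Omega_-$. Because the polygon is simply connected, the part of the nodal set separating $\Omega$ into these two pieces is a single arc meeting $\partial\Omega$ at exactly two points (a second separating arc would create a third nodal domain); each endpoint is either a vertex or lies in the interior of a single side. Hence the two endpoints split $\partial\Omega$ into two arcs, each a run of consecutive sides, with at least $N-2$ sides left entirely uncut (where $N\in\{2n+1,2n+2\}$), and $u$ has a fixed sign on every uncut side, placing that whole side in $\overline{\Omega_+}$ or in $\overline{\Omega_-}$.

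Next I would count. The at least $N-2$ uncut full sides are distributed between the two consecutive arcs, so the larger arc contains at least $\lceil (N-2)/2\rceil\ge n$ consecutive full sides; call this collection $D$ and suppose, after relabeling, that $D\subset\overline{\Omega_+}$. Define the trial function $\tilde u$ equal to $u$ on $\Omega_-$ and to $0$ on $\Omega_+$. Since $u$ vanishes on the separating nodal arc, the zero-extension satisfies $\tilde u\in H^1(\Omega)$, and $\tilde u=0$ on $D$, so $\tilde u$ is admissible for $\lambda_1^D$ in \eqref{RayleighDir}. A Green's identity computation on $\Omega_-$ (using $-\Delta u=\mu_2 u$, with $\partial_\nu u=0$ on $\partial\Omega\cap\partial\Omega_-$ and $u=0$ on the nodal arc) gives $R[\tilde u]=\mu_2$, hence $\lambda_1^D\le\mu_2$. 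As $D$ is a set of at least $n$ consecutive sides, the minimum of $\lambda_1^{D'}$ over all such admissible $D'$ is at most $\mu_2$, which is the first claim.

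For the final assertion I would drop the counting entirely: take $D=\partial\Omega\cap\overline{\Omega_+}$, the full portion of the boundary touching $\Omega_+$, choosing $\Omega_+$ so that $|D|\ge|\partial\Omega|/2$, which is always possible since the two boundary portions have total length $|\partial\Omega|$ up to a measure-zero set of nodal points. The identical trial function $\tilde u$ yields $\lambda_1^D\le\mu_2$ with $D$ of length at least half the perimeter; by the monotonicity $\lambda_1^{D_1}\le\lambda_1^{D_2}$ for $D_1\subset D_2$ noted after \autoref{lem:positive}, one may shrink $D$ to exactly half the perimeter without increasing the eigenvalue, giving the infimum bound. This version uses nothing about the number of boundary intersections nor about $\Omega$ being polygonal, so it holds for arbitrary domains.

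The step I expect to be the real obstacle is the topological claim that the separating nodal set meets $\partial\Omega$ at exactly two points; everything else is bookkeeping plus the same short trial-function computation used for triangles. This is precisely why the first assertion is phrased for polygons, where "consecutive sides" and the pigeonhole count are meaningful and the two-endpoint picture is cleanest, whereas the perimeter version — which only needs that the boundary splits into a part of length $\ge|\partial\Omega|/2$ adjacent to one nodal domain — survives in full generality.
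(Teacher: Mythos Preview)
Your proposal is correct and is precisely the generalization the paper has in mind: the paper does not supply a separate proof of this lemma but simply states it immediately after the triangle argument (``the result of this section generalizes to arbitrary polygons''), and your proof carries out that generalization by replacing the triangle's ``one clean side'' with the pigeonhole count yielding $\lceil (N-2)/2\rceil\ge n$ consecutive clean sides, together with the same zero-extension trial function. The only point the paper also leaves implicit is the one you flag yourself---that the nodal set is a single arc with exactly two boundary endpoints---which follows from Courant plus the standard fact that a closed nodal loop would force $\mu_2\ge\lambda_1$ on a strict subdomain.
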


As in the previous section, $\lambda_1^{MS}$ equals $\lambda_1$ of a kite built from two triangles. The Neumann eigenfunction for $\mu_2$ extended to the kite gives a symmetric eigenfunction of the kite. Given that $\mu_2$ and $\mu_3$ of the kite together can have at most one antisymmetric mode (see \cite[Lemma 2.1]{Shot}), we conclude that $\mu_2$ of the triangle is no larger than $\mu_3$ of the kite. Levine and Weinberger \cite{LW86} proved that $\lambda_1\ge \mu_3$ for any convex polygon, including kites, giving us the required inequality.

\subsection{Proof of: $\lambda_1^{LS}<\lambda_1^{LM}$. Symmetrization of isosceles triangles.}
\begin{figure}[t]
  \begin{center}

      \hspace{\fill}
      \begin{tikzpicture}[baseline=0,scale=1.3]
	\draw[thick,red] (-2,0) coordinate (a) -- (0,0.8) coordinate (c) -- ($-1*(c)$) coordinate (b) -- cycle;
	\draw (-2,0) -- (2,0) -- (c) -- cycle;
	\draw[thick,green!50!black,->] (2,0) -- (b);
	\coordinate (d) at ($0.5*(a)+0.5*(c)$);
	\coordinate (e) at ($(d)!(b)!90:(a)$);
	\draw[dotted] ($(d)+0.2*(d)-0.2*(e)$) -- ($(e)-0.2*(d)+0.2*(e)$);
      \end{tikzpicture}
      \hspace{\fill}
      \begin{tikzpicture}[baseline=0,scale=1.3]
	\draw[thick,red] (-2,0) coordinate (a) -- (0,1.5) coordinate (c) -- ($-1*(c)$) coordinate (b) -- cycle;
	\coordinate (d) at ($0.5*(a)+0.5*(c)$);
	\coordinate (e) at ($(d)!(b)!90:(a)$);
	\draw[dotted] ($(d)+0.2*(d)-0.2*(e)$) -- ($(e)-0.2*(d)+0.2*(e)$);
	\draw[thick,blue,dashed] (c) -- ($(e)+(e)-(b)$) coordinate (f) -- (a) -- cycle;
	\draw (-2,0) -- (2,0) -- (c) -- cycle;
	\draw[thick,green!50!black,->] (2,0) -- (f);
      \end{tikzpicture}
      \hspace{\fill}

  \end{center}
  \caption{Acute isosceles triangle (thick red line) and obtuse isosceles triangle (thin black line) generated by the same right triangle (their intersection). Two cases of continuous Steiner symmetrization based on the shape of the acute isosceles triangle: subequilateral on the left, superequilateral on the right.}
  \label{fig:domains2}
\end{figure}
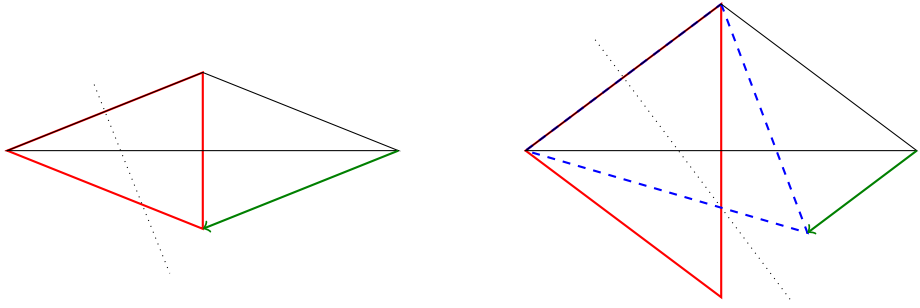
To prove this inequality we will use a symmetrization technique called the continuous Steiner symmetrization introduced by P\'olya and Szeg\"o \cite[Note B]{PS51}, and studied by Solynin \cite{So90,So12} and Brock \cite{Br95}. The author already used this technique for bounding Dirichlet eigenvalues of triangles in \cite{Siso}. See Section 3.2 in the last reference for detailed explanation. The most important feature of the transformation is that if one can map one domain to another using that transformation, then the latter has smaller Dirichlet eigenvalue.

Note that mirroring a right triangle along the middle side shows that $\lambda_1^{LS}$ of the triangle equals $\lambda_1$ of an acute isosceles triangle. Similarly, $\lambda_1^{LM}$ equals $\lambda_1$ for an obtuse isosceles triangle. We need to show that the acute isosceles triangle has smaller Dirichlet eigenvalue. \autoref{fig:domains2} shows both isosceles triangles. Position the isosceles triangles as on the figure, and perform the continuous Steiner symmetrization with respect to the line perpendicular to the common side.

If the acute isosceles triangle is subequilateral (vertical side is the shortest, left picture on \autoref{fig:domains2}), then before we fully symmetrize the obtuse triangle, we will find the acute one. The arrow on the figure shows how far we should continuously symmetrize. Therefore the acute isosceles triangle has smaller eigenvalue. 

If the acute isosceles triangle is superequilateral (vertical side is the longest, right picture on \autoref{fig:domains2}), then we first reflect the acute triangle across the symmetrization line, then perform continuous Steiner symmetrization. Again, we get that the acute isosceles triangle has smaller eigenvalue.

Note that this case seems similar to \autoref{sec:unknown}. However, we get to use symmetrization technique due to Dirichlet boundary, while in the other section we had to us a less powerful, but more broadly applicable, unknown trial function method. 

\subsection{For arbitrary triangles: $\lambda_1^{MS}<\lambda_1^{LS}<\lambda_1^{LM}$. Polarization with mixed boundary conditions.}

For this inequality we use another symmetrization technique called polarization. It was used by Dubinin \cite{Du93}, Brock and Solynin \cite{Br95b,So96,BS00,So12}, Draghici \cite{Dr03}, and the author \cite{Siso} to study various aspects of spectral and potential theory of the Laplacian. As with other kinds of symmetrization, if one can map a domain to some other domain, then the latter has smaller eigenvalue.  

  Polarization involves a construction of a test function for the lowest Dirichlet eigenvalue of the transformed domain from the nonnegative eigenfunction of the original domain. We choose to deemphasize the geometric transformation involved, and focus on the transplanted eigenfunction. In fact we transform a triangle into itself, so that boundary conditions change the way we need for the proof. 
  
  Note that unlike in other applications of polarization mentioned above, we apply it to mixed boundary conditions. We showed in \autoref{lem:positive} that the lowest mixed eigenvalue is simple and has a nonnegative eigenfunction. We will use this eigenfunction to create an eigenfunction on transformed domain.

  \begin{figure}[t]
    \begin{center}

      \hspace{\fill}
      \begin{tikzpicture}[baseline=0]
	\draw[thick,red] (0,0) node [left,black] {\tiny $0$} -- (45:5) coordinate (a) node [above,black] {\tiny $A$}-- (0:4) coordinate (b) node [below,black] {\tiny $B$};
	 \draw[very thick, densely dotted, black] (0,0) -- (b);
	 \draw[blue,dashed] (0,0) -- (0:5) coordinate (c) -- (45:4) coordinate (d);
	 \draw[dotted] (0,0) -- ($(a)!0.5!(c)$) coordinate (e);
	 \draw ($0.5*(d)+0.3*(a)+0.2*(e)$) node {\scriptsize $w$};
	 \draw ($0.33*(d)+0.33*(e)$) node {\scriptsize $v$};
	 \draw ($0.33*(b)+0.33*(e)$) node {\scriptsize $u$};
      \end{tikzpicture}
      \hspace{\fill}
      \begin{tikzpicture}[baseline=0]
         \draw[dashed,red] (0,0) -- (45:5) coordinate (a) -- (0:4) coordinate (b) -- (0,0);
	 \draw[blue,thick]  (b) -- (0:5) coordinate (c) -- (45:4) coordinate (d) -- (0,0);
	 \draw[very thick, densely dotted,black] (0,0) -- (b);
	 \draw[dotted] (0,0) -- ($(a)!0.5!(c)$) coordinate (e);
	 \draw ($0.5*(b)+0.3*(c)+0.2*(e)$) node {\scriptsize $w$};
	 \draw ($0.33*(d)+0.33*(e)$) node {\scriptsize $\min(\bar u,\bar v)$};
	 \draw ($0.33*(b)+0.33*(e)$) node {\scriptsize $\max(\bar u,\bar v)$};
      \end{tikzpicture}
      \hspace{\fill}

    \end{center}
    \caption{A triangle with Dirichlet condition on two sides ($OA$ and $AB$), and the same triangle reflected along the bisector of one the angle $AOB$ (with Dirichlet condition on blue lines). The eigenfunction from the left picture can be rearranged into a test function on the right picture, preserving boundary conditions, as long as $|OB|<|OA|$. }
    \label{fig:domains}
  \end{figure}
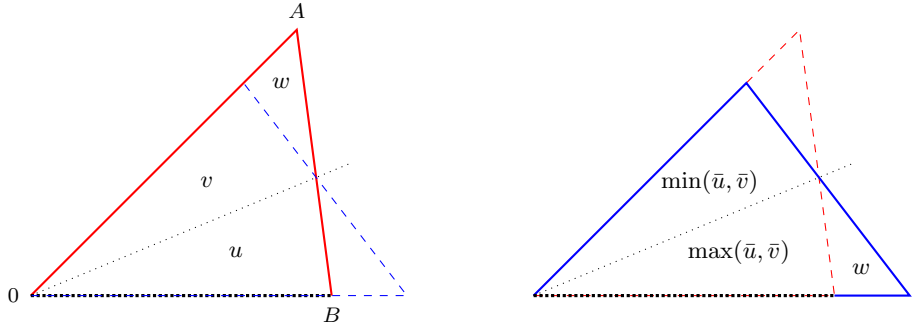

  Let $T$ be a triangle. We apply the Dirichlet condition on two sides. Without loss of generality let us assume we do this on $L$ and $S$ (see left picture on \autoref{fig:domains}). Let the eigenfunction for $\lambda_1^{LS}$ equal $u$, $v$ and $w$ on the parts of the domain shown on the figure. Let $\bar u$ and $\bar v$ be the symmetric extensions of $u$ and $v$ along the bisector of their common angle (dotted line). We rearrange the parts to fit the dashed triangle, as on the right picture of the same figure.

We need to check that the rearranged trial function is continuous on the blue triangle, and it satisfied the Dirichlet conditions on correct sides. It is crucial in this step that $u$ snd $v$ are nonnegative.

On the dotted line $\bar u=u=v=\bar v$, due to continuity of the original eigenfunction. On the dashed line $\max(\bar u,\bar v)=\bar v$, since $u$ satisfies the Dirichlet condition there. Hence the test function is continuous on the dashed line due to continuity of the original eigenfunction on the interface of $v$ and $w$.

On the long sloped blue side of the triangle $\min(\bar u,\bar v)=v=0$. On the part of the short sloped side to the right of the dashed line we have $w=0$. Finally, the part to the right satisfies $\min(\bar u,\bar v)=u=0$. Therefore the trial function satisfied the Dirichlet boundary condition on the middle and short sides of the blue right triangle. 

We polarized the red right triangle into the blue right triangle (same shape), but the Dirichlet conditions moved from $LS$ to $MS$, as we needed. In fact there is an additional part of the third side with Dirichlet condition applied, ensuring strict inequality in our result.

The only assumptions we needed in the construction is that the $|OB|>|OA|$ and Dirichlet condition on $AB$. The same conditions can be enforced in the comparison of $\lambda_1^{LM}$ and $\lambda_1^{LS}$.

\section{Proof of \autoref{cor:rhombus}}

 Four copies of the same right triangle can be used to build a rhombus. Let $R$ denote the rhombus, and $T$ the right triangle that can be used to build $R$ (see \autoref{fig:rhombus}).
    The order of eigenfunctions that is claimed in the theorem follows from the order of the eigenvalues for the triangle, \autoref{thm:order}. We need to show that there are no other eigenfunctions intertwined with the ones listed. All eigenfunctions can be taken symmetric or antisymmetric with respect to each diagonal. 
    
    By \cite[Lemma 2.1]{Shot}, the eigenspace $S$ of $\mu_2(R)$ and $\mu_3(R)$ can contain at most one eigenfuncion antisymmetric with respect to a given diagonal. Therefore if there are more than 2 eigenfunctions in $S$, the extra ones must be doubly symmetric. But the lowest doubly symmetric mode equals $\mu_2(T)$ and it is larger than $\lambda_1^M$ and $\lambda_1^S$ (these two eigenvalues generate antisymmetric eigenfunctions on $R$). Therefore $\mu_2(R)$ and $\mu_3(R)$ are simple. 
    
\begin{figure}[t]
  \begin{center}
\begin{tikzpicture}[scale=1.5,baseline=0]
  \draw (0,0) -- (1,0.7) -- (2,0) -- (1,-0.7) node [below] {\tiny $\mu_2(R)=\lambda_1^S(T)$}-- cycle;
  \draw[red] (1,0.7) -- (1,-0.7);
  \draw[blue,dashed] (0,0) -- (2,0);
\end{tikzpicture}
\begin{tikzpicture}[scale=1.5,baseline=0]
  \draw (0,0) -- (1,0.7) -- (2,0) -- (1,-0.7) node [below] {\tiny $\mu_3(R)=\lambda_1^M(T)$}-- cycle;
  \draw[red] (0,0) -- (2,0);
  \draw[blue,dashed] (1,0.7) -- (1,-0.7);
\end{tikzpicture}
\begin{tikzpicture}[scale=1.5,baseline=0]
  \draw (0,0) -- (1,0.7) -- (2,0) -- (1,-0.7) node [below] {\tiny $\mu_4(R)=\mu_2(T)$}-- cycle;
  \clip (0,0) -- (1,0.7) -- (2,0) -- (1,-0.7);
  \draw[blue,dashed] (0,0) -- (2,0);
  \draw[blue,dashed] (1,0.7) -- (1,-0.7);
  \draw[red] (0,0) circle (0.65);
  \draw[red] (2,0) circle (0.65);
\end{tikzpicture}
\begin{tikzpicture}[scale=1.5,baseline=0]
  \draw[red] (0,0) -- (1,0.7) -- (2,0) -- (1,-0.7) node [below,black] {\tiny $\lambda_2(R)=\lambda_1^{LS}(T)$}-- cycle;
  \draw[red] (1,0.7) -- (1,-0.7);
  \draw[blue,dashed] (0,0) -- (2,0);
\end{tikzpicture}
  \end{center}
  \caption{Neumann eigenfunctions for nearly square rhombi (red/solid lines - antisymmetry/nodal line, blue/dashed lines - symmetry). Note that $\mu_2$, $\mu_3$ and $\lambda_2$ correspond to mixed eigenvalues on a right triangle, while $\mu_4$ corresponds to the Neumann mode on the same triangle (the position of the nodal arcs for $\mu_4$ is based on numerical computations).}
  \label{fig:rhombus}
\end{figure}
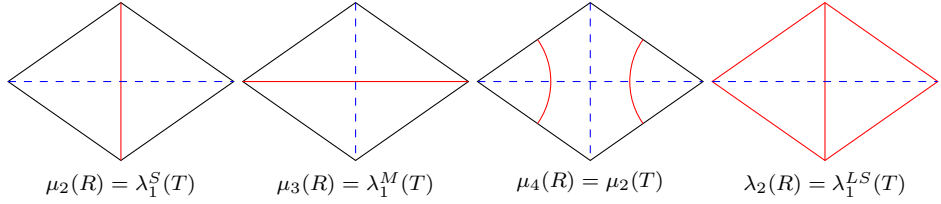
    Suppose an antisymmetric mode belongs to $\mu_4(R)$. Then it belongs to one of the following eigenvalues on $T$: $\lambda_1^{MS}$, $\lambda_k^S$ or $\lambda_k^M$ with $k\ge 2$. But these are larger than $\mu_2(T)$. Hence $\mu_4(R)$ consists of only doubly symmetric modes. On the other hand $\mu_2(T)$ is simple, hence $\mu_4(R)$ is also simple. Finally $\lambda_1(R)=\lambda_1^L>\mu_2(T)=\mu_4(R)$.
  \begin{remark}
    If $2\alpha=\pi/3$ then the rhombus has antisymmetric $\mu_2$. But then there is a double eigenvalue $\mu_3$ which equals to $\mu_2$ for equilateral triangle. Hence the above theorem fails for $\alpha\le \pi/6$. When $\alpha<\pi/6$, argument involving subequilateral triangles shows that $\mu_3$ is doubly symmetric. When $\alpha$ is very small the mode antisymmetric with respect to the long diagonal may have arbitrarily high index. 
  \end{remark}
  \begin{remark}
    Numerical results suggest that the eigenfunction for $\mu_5$ is either doubly antisymmetric for nearly square rhombi (same as $\lambda_1^{MS}$), or antisymmetric along the short diagonal with one more nodal line in each half (same as $\lambda_2^S$). The second doubly symmetric mode is always larger than the latter, but can be smaller than the former.
   The eigenfunction for $\lambda_3$ is either antisymmetric with respect to the long diagonal, or doubly symmetric. The numerical experiments suggest that the first case holds.
  \end{remark}
  \begin{remark}
    P\"utter \cite{P91} showed that the nodal line for the second Neumann eigenfunction for certain doubly symmetric domains is on the shorter axis of symmetry. However, rhombi do not satisfy the conditions required for these domains. 
  \end{remark}

\end{document}